\theoremstyle{plain}
\newtheorem{theorem}{Theorem}[section]
\newtheorem{lemma}[theorem]{Lemma}
\newtheorem{corollary}[theorem]{Corollary}
\newtheorem{conjecture}[theorem]{Conjecture}
\newtheorem{proposition}[theorem]{Proposition}
\theoremstyle{definition}
\newtheorem{definition}[theorem]{Definition}
\newcommand{\Z}{\ensuremath{\mathbb{Z}}}
\title[]{Coxeter quotients of the automorphism group of a Coxeter group}
\author{Olga Varghese}
\date{\today}
\address{Olga Varghese\\
	Department of Mathematics\\
	Otto-von-Guericke University of Magdeburg\\ 
	Universit\"atsplatz 2\\
	39106 Magdeburg (Germany)}
\email{olga.varghese@ovgu.de}
\keywords{Kazhdan's property (T), Automorphism groups of Coxeter groups}
\subjclass[2010]{Primary: 20F55; secondary: 20F65}
\thanks{The work was funded by the Deutsche Forschungsgemeinschaft (DFG, German Research Foundation) under Germany's Excellence Strategy EXC 2044--390685587, Mathematics M\"unster: Dynamics-Geometry-Structure and DFG grant VA 1397/2-1.}
\begin{document}
\pagenumbering{arabic}

\newpage
\begin{abstract}
We show that for a large class $\mathcal{W}$ of Coxeter groups the following holds: Given a group $W_\Gamma$ in $\mathcal{W}$, 
the automorphism group  ${\rm Aut}(W_\Gamma)$ virtually surjects onto some infinite Coxeter group. In particular, the group ${\rm Aut}(W_\Gamma)$ is virtually indicable and therefore does not have Kazhdan's property (T).
\end{abstract}
\maketitle

\section{Introduction}
One fascinating property of a group is property (T). It was defined by Kazhdan for topological groups in terms of unitary representations and was reformulated  by Delorme and Guichardet in geometric group theory. A countable group $G$ has Kazhdan's property (T) if  every action of $G$ on a real Hilbert space by isometries has a global fixed point (\cite[Theorem 2.12.4]{BekkaHarpeValette}). Examples of groups satisfying this property are finite groups (\cite[Proposition 1.1.5]{BekkaHarpeValette}), the general linear group ${\rm GL}_n(\mathbb{Z})$ for $n\geq 3$ (\cite[Theorem 4.2.5]{BekkaHarpeValette}), the automorphism group of a free group ${\rm Aut}(F_n)$ for $n\geq 4$ (\cite{KalubaKielakNowak}, \cite{KalubaNowakOzawa}, \cite{Nitsche}). In contrast, the groups ${\rm Aut}(F_2)$ and ${\rm GL}_2(\Z)$ do not have property (T). One algebraic reason for ${\rm Aut}(F_2)$ and ${\rm GL}_2(\Z)$ to not have property (T) is the fact that both groups can be written as amalgamated products. 

It is natural to try to relate properties of a group $G$ to properties of ${\rm Aut}(G)$. We investigate the following question:
\vspace{0.3cm}
\begin{quote}
\em{Under which conditions on the group $G$ does the automorphism group ${\rm Aut}(G)$ not satisfy Kazhdan's property (T)}? 
\end{quote}
\vspace{0.3cm}
In particular, we are interested in algebraic properties of ${\rm Aut}(G)$ that prohibit this group from having property (T). Here we focus on groups which are defined in a combinatorial way. Given a finite simplicial graph $\Gamma$ with the vertex set $V(\Gamma)$, the edge set $E(\Gamma)$ and with an edge-labeling $\varphi\colon E(\Gamma)\rightarrow\mathbb{N}_{\geq 2}$,  the Coxeter group $W_\Gamma$ associated to $\Gamma$ is the group with the presentation 
$$W_\Gamma=\langle V\mid v^2, (vw)^{\varphi(\left\{v,w\right\})} \text{ for all }v\in V(\Gamma) \text{ and whenever } \left\{v,w\right\}\in E(\Gamma)\rangle.$$
 A special subclass of the class of all Coxeter groups is the class consisting of right-angled Coxeter groups. Recall, a Coxeter group $W_\Gamma$ is called \emph{right-angled} if $\Gamma$ is discrete or $\varphi(E(\Gamma))=\left\{2\right\}$.

Let us consider the graphs and the associated Coxeter groups in Figure 1. 
\begin{figure}[h]
\begin{center}
\begin{tikzpicture}[scale=0.90]
\draw[fill=black]  (0,0) circle (2pt);
\draw[fill=black]  (2,0) circle (2pt);
\draw[fill=black]  (2,2) circle (2pt);
\draw[fill=black]  (0,2) circle (2pt);
\node at (1,-0.4) {$\Gamma_1$}; 

\draw[fill=black]  (5,0) circle (2pt);
\draw[fill=black]  (7,0) circle (2pt);
\draw[fill=black]  (5,2) circle (2pt);
\draw[fill=black]  (7,2) circle (2pt);
\draw (5,2)--(7,2);
\node at (6, 2.25) {$3$}; 
\node at (6,-0.4) {$\Gamma_2$};  

\draw[fill=black]  (10,0) circle (2pt);
\draw[fill=black]  (12,0) circle (2pt);
\draw[fill=black]  (10,2) circle (2pt);
\draw[fill=black]  (12,2) circle (2pt);
\draw (10,0)--(12,0);
\draw (10,0)--(10,2);
\draw (10,0)--(12,2);
\node at (11,0.2) {$2$}; 
\node at (9.8, 1) {$2$}; 
\node at (11, 1.3) {$2$}; 
\node at (11,-0.4) {$\Gamma_3$}; 
\end{tikzpicture}
\caption{Examples of Coxeter graphs.}
\end{center}
\end{figure}
The Coxeter group $W_{\Gamma_1}$ is isomorphic to $\Z/2\Z*\Z/2\Z*\Z/2\Z*\Z/2\Z$, $W_{\Gamma_2}\cong\Z/2\Z*\Z/2\Z*{\rm Sym}(3)$ and $W_{\Gamma_3}$ is isomorphic to the direct product of $\Z/2\Z$ and $\Z/2\Z*\Z/2\Z*\Z/2\Z$. We note that if $\Gamma$ is disconnected with connected components $\Gamma_1,\ldots,\Gamma_n$, then $W_\Gamma$ is the free product $W_{\Gamma_1}*\ldots*W_{\Gamma_n}$ and if $\Gamma$ is a join $\Gamma=\Gamma_1*\Gamma_2$ and $\varphi(\left\{v,w\right\})=2$ for all $v\in V(\Gamma_1)$ and $w\in V(\Gamma_2)$, then $W_\Gamma$ is the direct product $W_{\Gamma_1}\times W_{\Gamma_2}$. We call $W_\Gamma$ \emph{irreducible} if $W_\Gamma$ does not admit such a direct product decomposition.

The emphasis of this article is mainly on Kazhdan's property (T). It was proven in \cite{BJS} that  a Coxeter group $W_\Gamma$ has Kazhdan's property (T) if and only if $W_\Gamma$ is finite. A characterization of finite irreducible Coxeter groups in terms of Dynkin diagrams is given in \cite[\S 2]{Humphreys}. Hence, if a Coxeter group $W_\Gamma$ has property (T) or not can be checked easily on the Coxeter graph $\Gamma$.

Coxeter groups are well understood objects in geometric group theory, but there are many open questions concerning their automorphism groups.  We address the following conjecture that was formulated by Alain Valette during the virtual geometric group theory conference in Luminy, 2020.

\begin{conjecture}
	\label{Con1}
For every infinite Coxeter group $W_\Gamma$, the automorphism group ${\rm Aut}(W_\Gamma)$ virtually maps onto some infinite Coxeter group. 
\end{conjecture}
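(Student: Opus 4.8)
The plan is to prove the conjecture by exhibiting, inside a finite-index subgroup of ${\rm Aut}(W_\Gamma)$, an honest surjection onto an infinite Coxeter group; granting this, the extra assertions of the abstract are automatic, since infinite Coxeter groups have the Haagerup property, are virtually indicable, and --- when not virtually abelian --- are large, whereas property (T) is inherited by quotients and by finite-index subgroups. I would organise the argument by a case split on the size of ${\rm Out}(W_\Gamma)$.

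\emph{The rigid case} (${\rm Out}(W_\Gamma)$ finite). Then ${\rm Inn}(W_\Gamma)\cong W_\Gamma/Z(W_\Gamma)$ already has finite index in ${\rm Aut}(W_\Gamma)$. Decompose $W_\Gamma=W_{J_1}\times\cdots\times W_{J_k}$ into irreducible factors; since $W_\Gamma$ is infinite some factor, say $W_{J_1}$, is infinite, and an infinite irreducible Coxeter group has trivial centre, so $W_{J_1}$ survives as a direct factor of $W_\Gamma/Z(W_\Gamma)$. The projection $W_\Gamma/Z(W_\Gamma)\twoheadrightarrow W_{J_1}$ then realises a finite-index subgroup of ${\rm Aut}(W_\Gamma)$ mapping onto the infinite Coxeter group $W_{J_1}$; if $W_\Gamma$ is not virtually abelian one takes $W_{J_1}$ infinite and non-affine, hence large, and concludes that ${\rm Aut}(W_\Gamma)$ is large. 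This already covers, for instance, the right-angled $W_\Gamma$ whose defining graph is a long cycle.

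\emph{The non-rigid case.} When ${\rm Out}(W_\Gamma)$ is infinite, inner automorphisms no longer suffice and I would build the Coxeter quotient directly from the elementary automorphisms of $W_\Gamma$ --- graph symmetries, partial conjugations, and the relevant transvection- or twist-type moves. The decisive point is that, because $v^2=1$ for every generator $v$, a partial conjugation $\pi_{v,C}$ --- conjugation by the single generator $v$ of a union $C$ of connected components of $\Gamma\setminus{\rm st}(v)$ --- satisfies $\pi_{v,C}^2={\rm id}$, and the transvection-type moves that occur when a link lies inside a star are likewise involutions; these involutions are exactly the Coxeter generators to isolate. For example, when $\Gamma$ has a vertex $v$ whose star-complement disconnects, the partial conjugations along $v$ commute pairwise and, together with elementary automorphisms based at other vertices, generate an infinite Coxeter subgroup of ${\rm Aut}(W_\Gamma)$. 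The remaining step is to \emph{promote} such a subgroup from an inclusion to a virtual surjection; for the Coxeter groups in the class $\mathcal{W}$ --- which I expect to comprise the disconnected ones (so that $W_\Gamma=W_{\Gamma_1}*\cdots*W_{\Gamma_n}$ and one can run the argument through the Fouxe--Rabinovitch description of automorphisms of a free product), the ones possessing a vertex with disconnecting star-complement, the finite-${\rm Out}$ ones above, and iterated joins and free products built from these --- one should be able to construct a retraction of ${\rm Aut}(W_\Gamma)$ or ${\rm Out}(W_\Gamma)$ onto an infinite Coxeter subgroup, for instance via a homomorphism recording the partial-conjugation coordinates of an automorphism.

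The hard part, and the reason the argument does not yet reach all infinite Coxeter groups, is precisely this promotion: ${\rm Out}(W_\Gamma)$ can be a complicated group in which transvections in a ``dominant'' configuration contribute ${\rm GL}$-type subgroups entangled with the partial-conjugation part, so that no clean retraction onto a Coxeter subgroup presents itself. Proving the conjecture in full would seem to require either a complete structural description of ${\rm Out}(W_\Gamma)$ or a genuinely new way of producing Coxeter quotients of ${\rm Aut}(W_\Gamma)$ in these remaining cases.
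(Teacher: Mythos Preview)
The statement you are addressing is a \emph{conjecture}, and the paper does not prove it in full either; it establishes the conjecture only for the class~$\mathcal W$ described in Theorems~A and~B. So a complete proof is not what is being compared here.

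Your rigid case (${\rm Out}(W_\Gamma)$ finite) is correct and is exactly the observation recorded in the paper's introduction: ${\rm Inn}(W_\Gamma)\cong W_\Gamma/Z(W_\Gamma)$ projects onto an infinite irreducible factor with trivial centre. Nothing new is required there.

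Your non-rigid case, however, is not a proof but a sketch of a programme, and you say so yourself; the ``promotion'' step from a Coxeter subgroup to a virtual retraction is precisely the missing ingredient, and your proposed mechanism via partial conjugations and Fouxe--Rabinovitch generators is speculative. More importantly, your guess about the class~$\mathcal W$ is off target. The paper does \emph{not} organise the argument by the size of ${\rm Out}(W_\Gamma)$, by separating vertices, or by free-product structure. Instead it isolates two graph-theoretic hypotheses:
\begin{itemize}
\item (Theorem A) $\Gamma$ has a maximal clique $\Delta$ with $Z(W_\Delta)=1$. One lets ${\rm Aut}(W_\Gamma)$ act on the finite set of conjugacy classes of maximal ${\rm F}\mathcal A$ subgroups (these are exactly the $W_{\Delta_i}$'s), passes to the finite-index kernel, maps that kernel to the finite group ${\rm Out}(W_\Delta)$, and on the resulting finite-index subgroup every automorphism restricts to conjugation by a \emph{unique} element of $W_\Gamma$ (uniqueness coming from $Z_{W_\Gamma}(W_\Delta)=Z(W_\Delta)=1$). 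Sending $f$ to this unique conjugator gives the surjection onto $W_\Gamma$.
\item (Theorem B) $\Gamma$ has two non-adjacent \emph{even} vertices $v,w$. Then the retraction $r\colon W_\Gamma\twoheadrightarrow \langle v\rangle*\langle w\rangle$ has kernel characteristic under the finite-index subgroup ${\rm Spe}(W_\Gamma)$ that fixes each conjugacy class of involutions, yielding a map to ${\rm Aut}(\mathbb Z_2*\mathbb Z_2)$ with image $\mathbb Z_2*\mathbb Z_2$.
\end{itemize}
Neither argument uses partial conjugations, transvections, or any case distinction on $|{\rm Out}(W_\Gamma)|$; the key device in Theorem~A is the self-normalising property of $W_\Delta$ for a maximal clique, and in Theorem~B it is the characteristic kernel of the even retraction. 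Your outline misses both of these ideas, and without them the non-rigid case remains genuinely open --- as it does in the paper.
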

\vspace{0.2cm}
\begin{quote}
	\em{We denote by $\mathcal{W}$ the class of Coxeter groups that satisfy the above conjecture.}
\end{quote}
\vspace{0.3cm}

Our goal here is to verify Conjecture \ref{Con1} for a large class of Coxeter groups. But before we move on, we want to make a note on some consequences for a Coxeter group to be in the class $\mathcal{W}$.

Recall, a group $G$ is said to be \emph{virtually indicable} if there exists a finite index subgroup $H\subseteq G$ such that $H$ surjects onto $\Z$. It was proven in \cite{Gonciulea} that an infinite Coxeter group is virtually indicable. Note that Kazhdan's property (T) is preserved under taking finite index subgroups (\cite[Proposition 2.5.7]{BekkaHarpeValette}) and is inherited by quotients (\cite[Proposition 2.5.1]{BekkaHarpeValette}). Since $\Z$ does not have property (T) we obtain the following result.
\begin{corollary}
	\label{T}
	Let $W_\Gamma$ be a Coxeter group. If $W_\Gamma$ is in the class $\mathcal{W}$, then ${\rm Aut}(W_\Gamma)$ is virtually indicable and therefore does not have Kazhdan's property (T).
\end{corollary}

In particular, we conjecture

\begin{conjecture}
	Every automorphism group ${\rm Aut}(W_\Gamma)$ of an infinite Coxeter group $W_\Gamma$  does not have Kazhdan's property (T).
\end{conjecture}

Results regarding Conjecture \ref{Con1} were proved in \cite[Corollary 1.3]{GenevoisVarghese} where it is shown that the automorphism group of an infinite right-angled Coxeter group virtually maps onto $\Z/2\Z*\Z/2\Z$. In particular, all infinite right-angled Coxeter groups are in the class $\mathcal{W}$. The special case where $\Gamma$ has no edges was shown in \cite[Theorem 3.5]{Varghese}.

\subsection{Finite outer automorphism groups of Coxeter groups}
First we consider cases, where the subgroup ${\rm Inn}(W_\Gamma)$ consisting of inner automorphisms of $W_\Gamma$ has finite index in ${\rm Aut}(W_\Gamma)$. 
\begin{lemma}
	\label{FiniteOut}
	Let $W_\Gamma$ be a Coxeter group. If $W_\Gamma$ is infinite and ${\rm Out}(W_\Gamma)$ is finite, then ${\rm Aut}(W_\Gamma)$ virtually maps onto an infinite special subgroup of $W_\Gamma$.  
\end{lemma}

Examples of Coxeter groups with finite outer automorphism group are Coxeter groups $W_\Gamma$ where $\Gamma$ is a complete graph (\cite{HowlettRowleyTaylor}), affine Coxeter groups (\cite[Proposition 4.5]{Franzsen}), hyperbolic Coxeter groups in the sence of Humphreys \cite[Theorem 4.10]{Franzsen} and many right-angled Coxeter groups \cite{SaleSusse}. 

Recall, given a graph $\Gamma$, a {\em maximal  clique} $\Delta$ in $\Gamma$ is a maximal complete subgraph of $\Gamma$.

\begin{theorem}
	\label{InfiniteComplete}
	Let $\Gamma$ be a connected graph and $W_\Gamma$ be a Coxeter group. We denote by $\Delta_1, \ldots, \Delta_n$ its maximal cliques and by $\Omega_1,\ldots, \Omega_m$ the nonempty intersections of these maximal cliques where the intersection is taken over at least two maximal cliques. 
	
	If the centralizer of the special parabolic subgroup $W_{\Omega_i}$ is trivial for all $i=1,\ldots, m$, then the subgroup ${\rm Inn}(W_\Gamma)$ has finite index in ${\rm Aut}(W_\Gamma)$.  
\end{theorem}

Before we move on, let us consider an example of a Coxeter group that satisfies the assumptions of Theorem \ref{InfiniteComplete}. 

\begin{figure}[h]
	\begin{center}
		\begin{tikzpicture}
			\draw[fill=black]  (0,0) circle (2pt);
			\draw[fill=black]  (2,1) circle (2pt);			
			\draw[fill=black]  (0,2) circle (2pt);
			\draw (0,0)--(2,1);
			\draw (0,2)--(2,1);
			\node at (1, 0.2) {$3$}; 
			\draw (0,2)--(0,0);
			\node at (-0.2, 1) {$3$};
			\draw[fill=black]  (-2,1) circle (2pt);
			\draw (-2,1)--(0,2);
			\node at (-1, 1.8) {$7$};
			\node at (1, 1.8) {$3$};
			\draw (-2,1)--(0,0);
			\node at (-1, 0.2) {$3$};
		\end{tikzpicture}
	\caption{}
	\end{center}
\end{figure}

The graph in Figure $2$ has two maximal cliques $\Delta_1$ and $\Delta_2$. Further the centralizer of the special parabolic subgroup $W_{\Delta_1\cap\Delta_2}=\langle v,w\mid v^2=1, w^2=1, (vw)^3=1\rangle\cong{\rm Sym}(3)$ is trivial. Hence, by Theorem \ref{InfiniteComplete} the group ${\rm Out}(W_\Gamma)$ is finite. We note that the structure of centralizers of special parabolic subgroups of Coxeter groups was studied in \cite{Brink}, \cite{BahlsIso}, \cite{Nuida1}.
  
\subsection{Automorphism groups of Coxeter groups with disconnected graphs} 
Our next result is about ${\rm Aut}(W_\Gamma)$ where $\Gamma$ has at least two connected components. Using Kurosh's subgroup theorem \cite{Kurosh} we reduce Conjecture \ref{Con1} to connected graphs.

\begin{theorem}
	\label{TwoConnectedComponents}
	Let $W_\Gamma$ be a Coxeter group and let $\Gamma_1,\ldots, \Gamma_n$ be the connected components of $\Gamma$. If $n\geq2$, then for all $i, j\in\left\{1,\ldots, n\right\}$, $i\neq j$ the automorphism group ${\rm Aut}(W_\Gamma)$ virtually
	surjects onto $W^{ab}_{\Gamma_i}*W^{ab}_{\Gamma_j}\cong (\Z/2\Z)^k*(\Z/2\Z)^l$ where $k, l\geq 1$.  
\end{theorem}

It is worth pointing out that the abelianisation $W^{ab}_\Gamma$ of a Coxeter group $W_\Gamma$ can be easily read off the defining graph $\Gamma$ as follows: Let $\sim$ be the equivalence relation
on $V(\Gamma)$ defined by taking the transitive closure of the relation: $v\sim w$ if and only if $v$ and $w$ are adjacent and the edge label of $\left\{v,w\right\}$ is odd. We denote by $V(\Gamma)/\sim$ the set of the equivalence classes of this equivalence relation. Then $W^{ab}_\Gamma\cong (\Z/2\Z)^k$ where $k=|V(\Gamma)/\sim|$, see \cite[Fact 3.16]{Bernhard}. 


\subsection{Automorphism groups of (even) Coxeter groups}
Given a Coxeter group $W_\Gamma$ and a special parabolic subgroup $W_\Omega$ that sits 'nicely' in $W_\Gamma$, then there exists a retraction map $W_\Gamma\twoheadrightarrow W_\Omega$ and the kernel of this map is characteristic with respect to a finite index subgroup of ${\rm Aut}(W_\Gamma)$. Using these observations we prove the following result. 
\begin{theorem}
	\label{Even}
Let $W_\Gamma$ be a Coxeter group. If there exists an infinite special subgroup $W_\Omega$ such that ${\rm Out}(W_\Omega)$ is finite and for any $\left\{v,w\right\}\in E(\Gamma)$ where $v\in V(\Gamma)-V(\Omega)$, $w\in V(\Omega)$ the edge label of $\left\{v,w\right\}$ is even,
then the automorphism group ${\rm Aut}(W_\Gamma)$  virtually surjects onto  an infinite special subgroup of $W_\Omega$. 
\end{theorem}

Theorem \ref{Even} has an immediate consequence for even Coxeter groups. Even Coxeter group are known  as a generalization of right-angled Coxeter groups. A Coxeter group $W_{\Gamma}$ is called \emph{even}, if all edge labels are even. A vertex $v\in V(\Gamma)$ is called \emph{even}, if all edge-labels of $e\in E(\Gamma)$ with $v\in e$ are even.

\begin{corollary}
	\label{CorEven}
Let $W_\Gamma$ be a Coxeter group. If there exist two non-adjacent even vertices $v, w\in V(\Gamma)$, then the automorphism group ${\rm Aut}(W_\Gamma)$  virtually surjects onto $\Z/2\Z*\Z/2\Z$.

In particular, all infinite even Coxeter groups are in the class $\mathcal{W}$.	
\end{corollary}	

\subsection{Kazhdan's property (T)}
Regarding property (T) we immediately obtain  
\begin{corollary}
Let $W_\Gamma$ be an infinite Coxeter group. If
\begin{enumerate}
	\item ${\rm Out}(W_\Gamma)$ is finite or
	\item $\Gamma$ is disconnected or
	\item there exist two non-adjacent even vertices in $\Gamma$ or
	\item there exists an infinite special parabolic subgroup $W_\Omega$  such that 
	\begin{enumerate}
		\item ${\rm Out}(W_\Omega)$ is finite and 
		\item all edges in $E(\Gamma)$ where one vertex is in $V(\Omega)$ and the other vertex is in $V(\Gamma)-V(\Omega)$ have even labels,
\end{enumerate}
then ${\rm Aut}(W_\Gamma)$ does not have Kazhdan's property (T).
\end{enumerate}	
\end{corollary}

\subsection*{Acknowledgment}
The author thanks Anthony Genevois and Philip Möller for giving many constructive comments which helped improving the quality of the manuscript. The author also thanks the referee for careful reading of the manuscript and for many helpful comments. Additionally the author thanks Luis Paris for pointing out a gap in the previous version of this article.
\section{Graphs and groups}

\subsection{Graphs}
In this section we define simplicial graphs and collect some important properties of these combinatorial objects which we will need to define the groups of our interest. We largely follow \cite{Diestel}.
 
A \emph{simplicial graph} $\Gamma$ consists of a set $V(\Gamma)$ of \emph{vertices}
and a set $E(\Gamma)$ of subsets
of $V(\Gamma)$ of cardinality two which are called \emph{edges}. A graph $\Gamma$ is said to be \emph{finite} if $V(\Gamma)$ is a finite set. Two vertices $v, w\in V(\Gamma)$ are called \emph{adjacent}, if $\left\{v, w\right\}$ is an edge of $\Gamma$. By an \emph{edge-labeling} of $\Gamma$ we mean just a map $\varphi\colon E(\Gamma)\to \left\{2,3,\ldots\right\}$.
In practice, a graph $\Gamma$ is often represented by a diagram. We draw a point for each vertex $v\in V(\Gamma)$ of the graph, and a line joining two vertices $v$ and $w$ if $\left\{v, w\right\}\in E(\Gamma)$. If all
the vertices of $\Gamma$ are pairwise adjacent, then $\Gamma$ is called \emph{complete}.  A graph $\Lambda=(X,Y)$ is a subgraph of $\Gamma$ if $X\subseteq V(\Gamma)$, $Y\subseteq E(\Gamma)$ and all vertices of the edges in $Y$ are in $X$. A \emph{(maximal) clique} in $\Gamma$ is a (maximal) complete subgraph of $\Gamma$. Given a subset $S\subseteq V(\Gamma)$, the \emph{graph generated by $S$}, denoted by $\langle S\rangle$ is the graph with vertex set $S$
and edge set $\{\{v,w\}\in E(\Gamma)\mid v,w\in S\}$. A subgraph $\Lambda=(X,Y)$ of $\Gamma$ is called \emph{full} if $\langle X\rangle=\Lambda$. 
A graph $\Gamma$ is called \emph{connected} if for every two vertices $v, w\in V(\Gamma)$ there exist vertices $v_1,\ldots, v_n\in V(\Gamma)$ such that $\left\{v,v_1\right\}, \left\{v_i,v_{i+1}\right\}, \left\{v_n,w\right\}\in E(\Gamma)$ for $i=1,\ldots, n-1$. 
In particular, a graph $P=(\left\{v_1,\ldots, v_n\right\},\left\{\left\{v_1,v_2\right\},\left\{v_2,v_3\right\},\ldots,\left\{v_{n-1}, v_n\right\}\right\})$ is called a \emph{path from $v_1$ to $v_n$}. A subgraph $\Lambda$ is a \emph{connected component} of $\Gamma$ if $\Lambda$ is connected and is maximal with this property. Let us discuss some properties of the graph in Figure 3 in detail.

\begin{figure}[h]
\begin{center}
\begin{tikzpicture}
\draw[fill=black]  (5,0) circle (2pt);
\node at (5,-0.3) {$v_1$}; 
\draw[fill=black]  (7,0) circle (2pt);
\node at (7,-0.3) {$v_2$}; 
\draw[fill=black]  (5,2) circle (2pt);
\node at (5,1.7) {$v_4$}; 
\draw[fill=black]  (7,2) circle (2pt);
\node at (7,1.7) {$v_3$}; 
\draw (5,2)--(7,2); 
\end{tikzpicture}
\caption{$\Gamma=(\left\{v_1, v_2, v_3, v_4\right\}, \left\{\left\{v_3, v_4\right\}\right\})$}
\end{center}
\end{figure}
The subgraphs $\Delta_1=(\left\{v_3\right\},\emptyset)$ and $\Delta_2=(\left\{v_3, v_4\right\},\left\{\left\{v_3,v_4\right\}\right\})$ are  cliques in $\Gamma$ and $\Delta_2$ is a maximal clique. The graph $\Lambda=(\left\{v_3, v_4\right\},\emptyset)$ is a subgraph of $\Gamma$, but $\Lambda$ is not a full subgraph, because $v_3$ and $v_4$ are adjacent in $\Gamma$ but not in $\Lambda$. For a set $S=\left\{v_1, v_3, v_4\right\}$ the graph $\langle S\rangle$ is equal to $(\left\{v_1, v_3, v_4\right\}, \left\{\left\{v_3, v_4\right\}\right\})$. Further, $\Gamma$ has exactly three connected components. 

\subsection{Coxeter groups}
Let us recall the basic definitions and notations concerning Coxeter groups. For further details we refer to \cite{Davis}.
\begin{definition}
Let $\Gamma$ be a finite simplicial graph with an edge-labeling $\varphi:E(\Gamma)\rightarrow\mathbb{N}_{\geq 2}$.  We call such a labeled graph a \emph{Coxeter graph}. The Coxeter group $W_\Gamma$ is defined as follows:
$$W_\Gamma=\langle V(\Gamma)\mid v^2, (vw)^{\varphi(\left\{v,w\right\})} \text{ for all }v\in V(\Gamma) \text{ and whenever } \left\{v,w\right\}\in E(\Gamma)\rangle.$$
\end{definition}
Let us examine the following example of a Coxeter graph. 

\begin{figure}[h]
	\begin{center}
		\begin{tikzpicture}
			\draw[fill=black]  (0,-3) circle (1pt);
			\draw[fill=black]  (2,-3) circle (1pt);
			\draw (0,-3)--(2,-3);
			\node at (1,-2.75) {$5$}; 
			\node at (-0.5,-2.5) {$\Gamma$}; 			 
		\end{tikzpicture}
	\caption{$W_{\Gamma}=\langle v,w \mid v^2, w^2, (vw)^5\rangle\cong\mathbb{Z}/5\Z\rtimes\mathbb{Z}/2\Z$}
	\end{center}
\end{figure}

The Coxeter group $W_\Gamma$ defined by the Coxeter graph $\Gamma$ in  Figure 4 is called the dihedral group $D_5$.
This Coxeter group belongs to an infinite family of Coxeter groups: the dihedral groups $D_n$ of order $2n$. More precisely,	for $\Gamma=(\left\{v,w\right\}, \left\{\left\{v,w\right\}\right\})$ with $\varphi(\left\{v,w\right\})=n\geq3$, the corresponding Coxeter group $W_\Gamma=\langle v,w \mid v^2, w^2, (vw)^n\rangle$ is the dihedral group $D_n$, the group of isometries of the $n$-gon. Some of the algebraic properties of the dihedral groups depend on whether $n$ is even or odd. For example, 
the center of $D_n$ is trivial if $n$ is odd and is isomorphic to $\langle (vw)^{n/2}\rangle\cong\mathbb{Z}/2\Z$ if $n$ is even.
Further, the generators $v$ and $w$ are conjugate if and only if $n$ is odd. In a general Coxeter group $W_\Gamma$, generators $v$ and $w$ are conjugate if and only if there is a path in $\Gamma$ between $v$ and $w$ such that each edge in this path has an odd label, see \cite[Proposition 5.3]{BahlsIso}.

Let us describe the algebraic structure of the Coxeter groups defined by the Coxeter graphs in Figure 5. One can show that $W_{\Gamma_1}\cong{\rm Sym}(5)$,  $W_{\Gamma_2}\cong(\Z/2\Z)^4\rtimes{\rm Sym}(4)$ and $W_{\Gamma_3}\cong\Z^3\rtimes{\rm Sym}(4)$.

\begin{figure}[h]
\begin{center}
\begin{tikzpicture}
\draw[fill=black]  (0,0) circle (1pt);
\draw[fill=black]  (2,0) circle (1pt);
\draw[fill=black]  (2,2) circle (1pt);
\draw[fill=black]  (0,2) circle (1pt);
\draw (0,0)--(2,0);
\draw (0,0)--(0,2);
\draw (2,0)--(2,2);
\draw (0,2)--(2,2);
\draw (0,0)--(2,2);
\draw (2,0)--(0,2);
\node at (1,-0.25) {$3$}; 
\node at (1, 2.25) {$3$}; 
\node at (2.25, 1) {$3$}; 
\node at (-0.25,1) {$2$}; 
\node at (0.5, 0.8) {$2$}; 
\node at (1.5,0.8) {$2$}; 
\node at (-0.5,2.8) {$\Gamma_1$}; 

\draw[fill=black]  (5,0) circle (1pt);
\draw[fill=black]  (7,0) circle (1pt);
\draw[fill=black]  (5,2) circle (1pt);
\draw[fill=black]  (7,2) circle (1pt);
\draw (5,0)--(7,0);
\draw (5,0)--(5,2);
\draw (7,0)--(5,2);
\draw (5,2)--(7,2);
\draw (5,0)--(7,2);
\draw (5,0)--(5,2);
\draw (7,0)--(7,2);
\node at (6,-0.25) {$4$}; 
\node at (6, 2.25) {$3$}; 
\node at (7.25, 1) {$3$}; 
\node at (4.75,1) {$2$}; 
\node at (5.5, 0.8) {$2$}; 
\node at (6.5,0.8) {$2$}; 
\node at (4.5,2.8) {$\Gamma_2$}; 

\draw[fill=black]  (10,0) circle (1pt);
\draw[fill=black]  (12,0) circle (1pt);
\draw[fill=black]  (10,2) circle (1pt);
\draw[fill=black]  (12,2) circle (1pt);
\draw (10,0)--(12,0);
\draw (10,0)--(10,2);
\draw (12,0)--(10,2);
\draw (10,2)--(12,2);
\draw (10,0)--(12,2);
\draw (10,0)--(10,2);
\draw (12,0)--(12,2);
\node at (11,-0.25) {$3$}; 
\node at (11, 2.25) {$3$}; 
\node at (12.25, 1) {$2$}; 
\node at (9.75,1) {$2$}; 
\node at (10.5, 0.8) {$3$}; 
\node at (11.5,0.8) {$3$}; 
\node at (9.5,2.8) {$\Gamma_3$}; 
\end{tikzpicture}
\caption{}
\end{center}
\end{figure}

Given a Coxeter graph $\Gamma$, the next proposition shows that each subset $X\subseteq V(\Gamma)$ generates a Coxeter group. For a proof we refer to \cite[Theorem 4.1.6]{Davis}.
\begin{proposition}
\label{SpecialSubgroup}
Let $\Gamma$ be a Coxeter graph. For each $X\subseteq V(\Gamma)$, the subgroup which is generated by $X$ is canonically isomorphic to the Coxeter group $W_\Lambda$ where $\Lambda$ is the subgraph of $\Gamma$ generated by $X$. 
\end{proposition}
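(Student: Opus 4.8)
The plan is to produce the canonical homomorphism and then show it is an isomorphism by means of the Tits geometric representation. First I would define $\pi\colon W_\Lambda\to W_\Gamma$ to be the homomorphism sending each generator $v\in X$ of $W_\Lambda$ to the generator $v$ of $W_\Gamma$. To see this is well defined, note that since $\Lambda=\langle X\rangle$ is by definition the full subgraph of $\Gamma$ on $X$, two vertices $v,w\in X$ are adjacent in $\Lambda$ precisely when they are adjacent in $\Gamma$, and in that case carry the same label $\varphi(\{v,w\})$; hence every defining relator of $W_\Lambda$ is also a defining relator of $W_\Gamma$, so $\pi$ exists. Its image is manifestly the subgroup $\langle X\rangle\leq W_\Gamma$, so $\pi$ surjects onto $\langle X\rangle$ and the entire content of the proposition is the injectivity of $\pi$.

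For injectivity I would bring in the canonical linear representation. Recall that to a Coxeter graph $\Theta=(V_\Theta,E_\Theta)$ one associates the real vector space with basis $(e_v)_{v\in V_\Theta}$ carrying the symmetric bilinear form $B_\Theta$ defined on basis vectors by $B_\Theta(e_v,e_v)=1$ and, for $v\ne w$, by $B_\Theta(e_v,e_w)=-\cos\bigl(\pi/\varphi(\{v,w\})\bigr)$ if $\{v,w\}\in E_\Theta$ and $B_\Theta(e_v,e_w)=-1$ otherwise; each generator $v$ then acts by the $B_\Theta$-reflection $\sigma_v(x)=x-2B_\Theta(x,e_v)e_v$, and this yields a representation $\rho_\Theta\colon W_\Theta\to{\rm GL}(\mathbb{R}^{V_\Theta})$ which, by Tits' theorem, is faithful.

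Next I would apply this with $\Theta=\Gamma$ and consider the subspace $U:={\rm span}\{e_v\mid v\in X\}\subseteq\mathbb{R}^{V}$. Because $\Lambda$ is a full subgraph, the entries $B_\Gamma(e_v,e_w)$ for $v,w\in X$ coincide with the entries $B_\Lambda(e_v,e_w)$, so under the evident identification $U\cong\mathbb{R}^{X}$ one has $B_\Gamma|_U=B_\Lambda$. The subspace $U$ is invariant under $\sigma_v$ for every $v\in X$, hence under $\rho_\Gamma(\langle X\rangle)$; restricting therefore gives a homomorphism $r\colon\langle X\rangle\to{\rm GL}(U)$, and comparing the action on the basis vectors $e_v$, $v\in X$, shows $r\circ\pi=\rho_\Lambda$ after the identification $U\cong\mathbb{R}^{X}$. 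Since $\rho_\Lambda$ is faithful, $\pi$ must be injective; combined with the surjectivity onto $\langle X\rangle$ already observed, $\pi\colon W_\Lambda\to\langle X\rangle$ is an isomorphism.

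The only non-formal ingredient is Tits' theorem that $\rho_\Theta$ is faithful, and that is where I would expect the real work to hide; the remainder is bookkeeping. The one hypothesis that must not be dropped is fullness of $\Lambda$: it is exactly what makes $B_\Gamma$ restrict to $B_\Lambda$ on $U$ — equivalently, what ensures that no relation among the vertices of $X$ is imposed in $W_\Gamma$ beyond those already present in $W_\Lambda$ — and the statement genuinely fails for non-full subgraphs. One could instead avoid the linear representation and argue via the Exchange/Deletion Condition, showing that a word in $X$ which is reduced in $W_\Lambda$ remains reduced in $W_\Gamma$, but this essentially re-proves Tits faithfulness, so quoting \cite[Theorem 4.1.6]{Davis} is the economical route.
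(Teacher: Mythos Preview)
Your argument is correct. The paper does not actually prove this proposition; it simply refers the reader to \cite[Theorem~4.1.6]{Davis}. What you have written is one of the standard proofs of that theorem: set up the canonical map $\pi$, then use the faithfulness of the Tits linear representation together with the fact that, for a full subgraph, $B_\Gamma$ restricts to $B_\Lambda$ on the span of $\{e_v\mid v\in X\}$, so that $\rho_\Lambda$ factors through $\pi$ and injectivity follows. The bookkeeping is accurate, and your remark that fullness of $\Lambda$ is precisely what makes $B_\Gamma|_U=B_\Lambda$ is the right diagnosis of where the hypothesis enters.

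The only comparison worth making is that Davis' own treatment in Chapter~4 proceeds combinatorially, via the Deletion/Exchange Condition, rather than through the geometric representation; you allude to this alternative at the end. Both routes ultimately rest on the solution to the word problem for Coxeter groups, so neither is more elementary in any deep sense, but your version has the advantage of being self-contained modulo the single citation of Tits' faithfulness theorem.
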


The above proposition yields to the following definition.
\begin{definition}
Let $\Gamma$ be a Coxeter graph. For a full subgraph $\Lambda\subseteq\Gamma$, the subgroup $W_\Lambda$ of $W_\Gamma$ is called \emph{special parabolic} and any conjugate of a special parabolic subgroup is called \emph{parabolic}. Note that sometimes we write for $W_{\Lambda}$ just $W_{V(\Lambda)}$.	
\end{definition}

The building blocks of a Coxeter group are irreducible special parabolic subgroups. 
\begin{definition}
A Coxeter group $W_\Gamma$ is called {\it reducible} 
if $V(\Gamma)$ can be partitioned into two non-empty disjoint subsets $V_1$ and $V_2$ such that $\left\{x,y\right\}\in E(\Gamma)$ for all $x\in V_1, y\in V_2$ and $\varphi(\left\{x,y\right\})=2$ for all $x\in V_1$ and $y\in V_2$. 
In that case $W_\Gamma=W_{V_1}\times W_{V_2}$ (\cite[Prop. 4.1.7]{Davis}).
If this sort of decomposition is not possible, we call the Coxeter group $W_\Gamma$ \emph{irreducible}.
\end{definition}

For example, the Coxeter group $W_\Gamma$ in Figure 6 is reducible.
\begin{figure}[h]
	\begin{center}
		\begin{tikzpicture}
			\draw (5,0)--(7,0);
			\draw (5,0)--(5,2);
			\draw (7,0)--(5,2);
			\draw (5,0)--(7,2);
			\draw (5,0)--(5,2);
			\draw (7,2)--(5,2);
			\draw (7,0)--(7,2);
			\draw[fill=black]  (5,0) circle (1pt);
			\draw[fill=black]  (7,0) circle (1pt);
			\draw[fill=black]  (5,2) circle (1pt);
			\draw[fill=black]  (7,2) circle (1pt);
			\node at (6,-0.25) {$3$}; 
			\node at (7.25, 1) {$2$}; 
			\node at (4.75,1) {$2$}; 
			\node at (5.5, 0.8) {$2$}; 
			\node at (6.5,0.8) {$2$}; 
			\node at (6, 2.25) {$3$};
		\end{tikzpicture}
	\caption{$W_{\Gamma}={\rm Sym}(3) \times {\rm Sym}(3)$.}
	\end{center}
\end{figure}
We note that, given a Coxeter group $W_\Gamma$ we can always decompose $W_\Gamma$ in a direct product of irreducible special subgroups.
\vspace{0.3cm}

We are now going to recall the definition of the centralizer  of a subgroup $H$ in a group $G$. The \emph{centralizer} of $H$ in $G$, denoted by $Z_G(H)$, is defined as follows: 
$$Z_G(H):=\left\{g\in G\mid gh=hg\text{ for all }h\in H\right\}.$$
The \emph{center} of $G$ is the subgroup $Z(G):=Z_G(G)$. 

Given a Coxeter group $W_\Gamma$ and an arbitrary subgroup $H$, it is in general not known what $Z_{W_\Gamma}(H)$ is. However, when $H$ is a special parabolic subgroup one can say a lot about the structure of the centralizer of $H$ in $W_\Gamma$, see  \cite{Brink}, \cite{BahlsIso}, \cite{Nuida1}. In particular, the center of $W_\Gamma$ can be calculated as follows.

\begin{lemma}
	\label{centerW}
	Let $W_\Gamma$ be a Coxeter group. Write  $W_\Gamma=W_{\Gamma_1}\times\ldots\times W_{\Gamma_n}$ as a direct product of irreducible special subgroups.  We define $J:=\left\{i\in\left\{1,\ldots, n\right\}\mid W_{\Gamma_i}\text{ is finite}\right\}$. Then $Z(W_\Gamma)=\prod\limits_{j\in J} Z(W_{\Gamma_j})$.
\end{lemma}
\begin{proof}
	The center of $W_\Gamma$ is equal to $Z(W_{\Gamma_1})\times\ldots\times Z(W_{\Gamma_n})$. Since the center of an irreducible infinite Coxeter group is trivial \cite[Thm. D.2.10]{Davis} we obtain $Z(W_\Gamma)=\prod\limits_{j\in J} Z(W_{\Gamma_j})$.   
\end{proof}

The proof of Lemma \ref{FiniteOut} is now straightforward.
\begin{proof}[Proof of Lemma \ref{FiniteOut}.]
	We write $W_\Gamma$ as a direct product of irreducible special subgroups $W_\Gamma=W_{\Gamma_1}\times W_{\Gamma_2}\times\ldots\times W_{\Gamma_n}$. Since $W_\Gamma$ is infinite by assumption, there exists $j\in\left\{1,\ldots, n\right\}$ such that $W_{\Gamma_j}$  is infinite. Note that the center of $W_{\Gamma_j}$ is trivial. We obtain 
	$${\rm Inn}(W_\Gamma)\cong W_\Gamma/Z(W_\Gamma)\cong W_{\Gamma_1}/Z(W_{\Gamma_1})\times W_{\Gamma_2}/Z(W_{\Gamma_2})\times\ldots\times W_{\Gamma_n}/Z(W_{\Gamma_n})\overset{\pi}{\twoheadrightarrow} W_{\Gamma_j},$$
	where $\pi$ is the canonical projection to the factor $W_{\Gamma_j}$. By assumption the index of ${\rm Inn}(W_\Gamma)$ in ${\rm Aut}(W_\Gamma)$ is finite, hence ${\rm Aut}(W_\Gamma)$ maps virtually onto the special subgroup $W_{\Gamma_j}$. 
\end{proof}

It is well known that for a right-angled Coxeter group $W_\Gamma$ and a special subgroup $W_\Lambda$ there exists a retraction homomorphism $r:W_\Gamma\twoheadrightarrow W_\Lambda$ such that $r(v)=v$ for all $v\in V(\Lambda)$ and $r(w)=1$ for $w\in V(\Gamma)-V(\Lambda)$. This kind of a retraction map exists also for other Coxeter groups.
\begin{proposition}(\cite[Proposition 2.1]{Gal})
\label{retraction}
Let $W_\Gamma$ be a Coxeter group and $W_\Omega$ be a special parabolic subgroup. If for every $\left\{v,w\right\}\in E(\Gamma)$ where $v\in V(\Omega)$, $w\in V(\Gamma)-V(\Omega)$ the edge label is even, then
\begin{enumerate}
\item[(i)] there exists a well-defined homomorphism $r: W_\Gamma\to W_\Omega$ such that $r_{|W_\Omega}=id$ and $r(v)=1$ for all $v\in V(\Gamma)-V(\Omega)$.
\item[(ii)] the kernel of $r$ is the normal closure of the special parabolic subgroup $W_{V(\Gamma)-V(\Omega)}$.
\end{enumerate}
\end{proposition}

Let us examine the Coxeter group $W_\Gamma$ defined by the graph $\Gamma$ in Figure 7.
\begin{figure}[h]
	\begin{center}
		\begin{tikzpicture}
			\draw[fill=black]  (0,0) circle (1pt);
			\draw[fill=black]  (2,0) circle (1pt);
			\draw[fill=black]  (2,2) circle (1pt);
			\draw[fill=black]  (0,2) circle (1pt);
			\draw (0,0)--(2,0);
			\node at (1, -0.2) {$4$}; 
			\draw (2,0)--(2,2);
			\node at (2.2, 1) {$3$};
			\draw (2,2)--(0,2);
			\node at (1, 2.2) {$4$};
			\draw (0,2)--(0,0);
			\node at (-0.2, 1) {$3$};
			\draw[fill=black]  (-2,1) circle (1pt);
			\draw (-2,1)--(0,2);
			\node at (-1, 1.8) {$3$};
			\draw (-2,1)--(0,0);
			\node at (-1, 0.2) {$3$};
			\draw[fill=black]  (4,1) circle (1pt);
			\draw (4,1)--(2,2);
			\node at (3, 1.8) {$2$};
			\draw (4,1)--(2,0);
			\node at (3, 0.2) {$2$};
			\node at (-1.5, 2.5) {$\Gamma$}; 
			\node at (-2.3, 1) {$v_1$};
			\node at (0, -0.3) {$v_2$};
			\node at (0, 2.3) {$v_3$};
			\node at (4.3, 1) {$v_6$};
			\node at (2, -0.3) {$v_4$};
			\node at (2, 2.3) {$v_5$};
		\end{tikzpicture}
	\caption{}
	\end{center}
\end{figure}
Applying Proposition \ref{retraction} we know that there are three retractions: $W_\Gamma\twoheadrightarrow W_{\left\{v_1,v_2, v_3\right\}}$, $W_\Gamma\twoheadrightarrow W_{\left\{v_4,v_5\right\}}$ and $W_\Gamma\twoheadrightarrow W_{\left\{v_6\right\}}$. 

\subsection{Characteristic subgroups}
For the proofs of Theorems \ref{TwoConnectedComponents} and \ref{Even} the notion of characteristic subgroups with respect to a subgroup of the automorphism group will be needed. We refer to \cite{Leder} for more information on these subgroups.  

\begin{definition}
	Let $G$ be a group and $H\subseteq{\rm Aut}(G)$ be a subgroup. A normal subgroup $N\trianglelefteq G$ is called \emph{characteristic with respect to $H$}, if for all $f\in H$ we have $f(N)=N$.  
\end{definition}
Let $N\trianglelefteq G$ be characteristic with respect to $H\subseteq{\rm Aut}(G)$, then the map $H\to{\rm Aut}(G/N)$ given by $f\mapsto [gN\mapsto f(g)N]$ is a well-defined group homomorphism, see \cite[Lemma 7.4]{Leder}.

\begin{definition}
Let $W_\Gamma$ be a Coxeter group. We define \emph{the set of special automorphisms} ${\rm Spe}(W_\Gamma)\subseteq{\rm Aut}(W_\Gamma)$ as follows:
$f\in{\rm Spe}(W_\Gamma)$ if and only if for every $w\in W$ with $w^2=1$ there
exists $g\in W_\Gamma$  such that $f(w)=gwg^{-1}$.
\end{definition}
Clearly, ${\rm Spe}(W_\Gamma)$ is a group and the subgroup consisting of inner automorphisms ${\rm Inn}(W_\Gamma)$ is contained in ${\rm Spe}(W_\Gamma)$.

\begin{lemma}
	\label{retractionSpe}
	Let $W_\Gamma$ be a Coxeter group and $W_\Omega$  a special parabolic subgroup. 
	Assume that for every $\left\{v,w\right\}\in E(\Gamma)$ where $v\in V(\Omega)$, $w\in V(\Gamma)-V(\Omega)$ the edge label is even.  
	
	Then $K:={\rm ker}(r)$, where $r\colon W_\Gamma\twoheadrightarrow W_\Omega$ is the retraction map onto $W_\Omega$, is a characteristic subgroup with respect to ${\rm Spe}(W_\Gamma)$. Further, the image of the canonical map 
	$$\Phi\colon{\rm Spe}(W_\Gamma)\to{\rm Aut}(W_\Gamma/K)\cong {\rm Aut}(W_\Omega)$$
	where the isomorphism ${\rm Aut}(W_\Gamma/K)\cong {\rm Aut}(W_\Omega)$ is induced by $r$, contains ${\rm Inn}(W_\Omega)$. 
\end{lemma}
\begin{proof}
Let $f$ be a special automorphism of $W_\Gamma$. By Proposition \ref{retraction} the kernel of $r$ is generated by the set $\left\{xvx^{-1}\mid v\in V(\Gamma)-V(\Omega), x\in W_{\Gamma}\right\}$.
For $x\in W_{\Gamma}$ and $v\in V(\Gamma)-V(\Omega)$ there exists $a\in W_\Gamma$ such that
$$f(xvx^{-1})=f(x)ava^{-1}f(x^{-1})\in K.$$
Thus $f(K)\subseteq K$. 
This shows that for all $g\in{\rm Spe}(W_\Gamma)$ we have $g(K)\subseteq K$. In particular, we have $f^{-1}(K)\subseteq K$ which yields $K\subseteq f(K)$ and hence $f(K)=K$. Thus, $K$ is characteristic with respect to ${\rm Spe}(W_\Gamma)$.

Let $\iota\colon W_\Omega\rightarrow W_\Gamma/K$ be the induced isomorphism by $r$. Then $\iota$ induces an isomorphism as follows
$$\Psi\colon{\rm  Aut}(W_\Gamma/K)\to{\rm Aut}(W_\Omega)\text{ } \ f\mapsto \iota^{-1}\circ\Phi(f)\circ\iota.$$
Let $\rho\in{\rm Inn}(W_\Gamma)$ be a conjugation by $w\in W_\Omega$. Our goal is to show that $\rho_{|W_\Omega}\in{\rm Inn}(W_\Omega)$ has a preimage under $\Psi\circ\Phi$.
We consider the map
$$\Psi\circ\Phi\colon{\rm Spe}(W_\Gamma)\to{\rm Aut}(W_\Omega)$$
By definition of $\Phi$ we have for $x\in W_\Gamma$:
$\Phi(\rho)(xK)=wxw^{-1}K$.
Thus for $x\in W_\Omega$ we obtain
$$(\Psi\circ\Phi)(\rho)(x)=(\iota^{-1}\circ\Phi(\rho)\circ\iota) (x)=(\iota^{-1}\circ\Phi(\rho))(xK)=\iota^{-1}(wxw^{-1}K)\overset{wxw^{-1}\in W_\Omega}{=}wxw^{-1}$$
Hence $(\Psi\circ\Phi)(\rho)=\rho_{|W_\Omega}$.
That shows that the group ${\rm Inn}(W_\Omega)$ is contained in the image of $\Psi\circ\Phi$.
\end{proof}

\section{Proof of Theorem \ref{Even}}
\begin{proof}[Proof of Theorem \ref{Even}.]
	It was proven in \cite{Richardson} that the cardinality of conjugacy classes of subgroups of order $2$ in $W_\Gamma$ is finite. The group ${\rm Aut}(W_\Gamma)$ acts on these conjugacy classes in the canonical way. The kernel of this action is ${\rm Spe}(W_\Gamma)$. Note that ${\rm Spe}(W_\Gamma)$ has finite index in ${\rm Aut}(W_\Gamma)$. 
	
	By assumption there exists a full subgraph $\Omega\subseteq\Gamma$ such that $W_\Omega$ is infinite and ${\rm Out}(W_\Omega)$ is finite. Further, for every edge $\left\{v,w\right\}\in E(\Gamma)$ where $v\in V(\Gamma)-V(\Omega)$, $w\in V(\Omega)$, the edge label is even.
	
	We claim that ${\rm Spe}(W_\Gamma)$ surjects virtually onto ${\rm Inn}(W_\Omega)$. Let 
	$r:W_\Gamma\twoheadrightarrow W_\Omega$
	be the retraction map which exists by Proposition \ref{retraction}. By Lemma \ref{retractionSpe} the kernel $K:={\rm ker}(r)$ is characteristic with respect to ${\rm Spe}(W_\Gamma)$. Hence we get the following canonical homomorphism 
	$$\Psi:{\rm Spe}(W_\Gamma)\rightarrow{\rm Aut}(W_\Gamma/K)\cong {\rm Aut}(W_\Omega).$$
	Further, by Lemma \ref{retractionSpe} the image of $\Psi$ contains ${\rm Inn}(W_\Omega)$. Since ${\rm Out}(W_\Omega)$ is finite by assumption, the group $\Psi^{-1}({\rm Inn}(W_\Omega))$ has finite index in ${\rm Spe}(W_\Gamma)$ and surjects onto ${\rm Inn}(W_\Omega)$. The proof of Lemma \ref{FiniteOut} shows that ${\rm Inn}(W_\Omega)$ maps onto an infinite special subgroup of $W_\Omega$. Hence ${\rm Aut}(W_\Gamma)$
    maps virtually onto an infinite special subgroup of $W_\Omega$.
\end{proof}


\section{Proof of Theorem \ref{FiniteOut}}

The crucial piece in the proof of Theorem \ref{FiniteOut} is the following result.
\begin{theorem}(\cite[Theorem 28]{MihalikTschantz})
	\label{CW}
	Let $\Gamma$ be a Coxeter graph and $\Delta_1,\ldots, \Delta_n$ be all maximal cliques of $\Gamma$. Let $C(W_\Gamma)$ be the subgroup of all $f\in {\rm Aut}(W_\Gamma)$ such that for $i\in\left\{1,\ldots, n\right\}$ there exists an element $w_{i,f}\in W_\Gamma$ so that: $$f(x)=w_{i,f}\cdot x\cdot {w_{i,f}}^{-1}\text{\ for all\ }
	x\in W_{\Delta_i}.$$
	The subgroup $C(W_\Gamma)$ has finite index in ${\rm Aut}(W_\Gamma)$.
\end{theorem}

A natural question is when the subgroup $C(W_\Gamma)$ is equal to the inner automorphism group of $W_\Gamma$. If $\Gamma$ is not connected, then in most cases $C(W_\Gamma)$ is much larger than ${\rm Inn}(W_\Gamma)$ as the following observation shows: Let $\Gamma_1, \ldots, \Gamma_n$ be the connected components of $\Gamma$ and $W_{\Gamma_1}, \ldots, W_{\Gamma_n}$ the corresponding special subgroups. Then $W_\Gamma=W_{\Gamma_1}*\ldots* W_{\Gamma_n}$. We assume that $n\geq 2$. Let $a$ be a non-central element in $W_{\Gamma_1}$, more precisely $a\in W_{\Gamma_1}$ and $a\notin Z(W_{\Gamma_1})$. We define $f_{12}:W_\Gamma \rightarrow W_\Gamma$ as follows: 
$f_{12}(v)=v$ for all $v\in V(\Gamma)-V(\Gamma_2)$ and $f_{12}(w)=awa^{-1}$ for all $w\in V(\Gamma_2)$. The map $f_{12}$ induces naturally an automorphism of $W_\Gamma$. Moreover $f_{12}$ is contained in $C(W_\Gamma)$. Since the element $a$ does not commute with $W_{\Gamma_1}$ the automorphism $f_{12}$ is not inner. 

\begin{lemma}
	\label{CWInn}
	Let $\Gamma$ be a connected Coxeter graph. We denote by $\Delta_1, \ldots, \Delta_n$ its maximal cliques and by $\Omega_1,\ldots, \Omega_m$ the nonempty intersections of these maximal cliques where the intersection is taken over at least two maximal cliques. 
	
	If the centralizer of the special parabolic subgroup $W_{\Omega_i}$ is trivial for all $i=1,\ldots, m$, then the subgroup $C(W_\Gamma)={\rm Inn}(W_\Gamma)$. In particular, ${\rm Inn}(W_\Gamma)$  has finite index in ${\rm Aut}(W_\Gamma)$.	
\end{lemma}
\begin{proof}
	Let $f$ be in $C(W_\Gamma)$. Our goal is to show that $f$ is an inner automorphism of $W_\Gamma$. Clearly, it is sufficient to show that there exists $a\in W_\Gamma$ such that for all $v\in V(\Gamma)$ we have $f(v)=ava^{-1}$. 
	
	By the definition of $C(W_\Gamma)$ we know that there exists $w_{1,f}\in W_\Gamma$ such that for all $x\in W_{\Delta_1}$ we have $f(x)=w_{1,f}\cdot x\cdot {w_{i,f}}^{-1}$. If $\Gamma$ itself is a clique, then we are done. Otherwise, since $\Gamma$ is connected there exists a vertex $v\in V(\Gamma)-V(\Delta_1)$ and $w\in V(\Delta_1)$ such that $\left\{v,w\right\}\in E(\Gamma)$. Further, there exists $\Delta_j$ such that $v,w\in V(\Delta_j)$. Hence, the intersection of $\Delta_1$ and $\Delta_j$ is non-empty.
	Again, by definition of $C(W_\Gamma)$ there exists $w_{j,f}\in W_\Gamma$ such that $f(x)=w_{j,f}\cdot x\cdot {w_{j,f}}^{-1}$ for all $x\in W_{\Delta_j}$.
	
	For $x\in W_{\Delta_1}\cap W_{\Delta_j}=W_{\Delta_1\cap\Delta_j}$ we have
	$$f(x)=w_{1,f}\cdot x\cdot {w_{1,f}}^{-1}=w_{j,f}\cdot x\cdot {w_{j,f}}^{-1}$$
	Hence the element ${w_{j,f}}^{-1}w_{1,f}\in Z_{W_\Gamma}(W_{\Delta_1\cap\Delta_j})$. By assumption, this centralizer is trivial, thus $w_{1,f}=w_{j,f}$. 
	
	In particular, the above proof strategy shows that for $k\in\left\{1,\ldots, n\right\}$ where $\Delta_k\cap \Delta_1\neq\emptyset$ we have 
	$$f(v)=w_{1,f}\cdot v\cdot {w_{1,f}}^{-1}\text{\ for all \ }v\in V(\Delta_k).$$
		
	Let $J:=\left\{k\in\left\{1,\ldots, n\right\}\mid \Delta_k\cap\Delta_1\neq\emptyset\right\}$. If $V(\Gamma)= \bigcup_{k\in J} V(\Delta_k)$, then we have proved that $f$ is a global conjugation by $w_{1,f}$. 
		Otherwise the set $V(\Gamma)-\bigcup_{k\in J} V(\Delta_k)$ is non-empty. Since $\Gamma$ is connected, there exist $v\in V(\Gamma)-\bigcup_{k\in J} V(\Delta_k)$ and  $w\in\bigcup_{k\in J} V(\Delta_k)$ such that $\left\{v,w\right\}\in E(\Gamma)$. We choose $\Delta_l$ such that $v,w\in V(\Delta_l)$. Hence there exists $k\in J$ such that $\Delta_l\cap\Delta_k\neq\emptyset$.
	Now we proceed as above to show that $f(x)=w_{1,f}\cdot x\cdot {w_{1,f}}^{-1}$ for all $x\in \Delta_l$.
	
	After applying the above proof strategy finitely many times, we see that $f$ is a global conjugation by $w_{1,f}$.
\end{proof}

Now we turn to the proof of Theorem \ref{InfiniteComplete}.
\begin{proof}[Proof of Theorem \ref{InfiniteComplete}]
	Let $W_\Gamma$ be an infinite Coxeter group that satisfies the assumptions of Theorem \ref{FiniteOut}. By Theorem \ref{CW} we know that the subgroup $C(W_\Gamma)$ is of finite index in ${\rm Aut}(W_\Gamma)$ and by Lemma \ref{CWInn} we have $C(W_\Gamma)={\rm Inn}(W_\Gamma)$. Hence, ${\rm Inn}(W_\Gamma)$ has finite index in ${\rm Aut}(W_\Gamma)$. Since $W_\Gamma$ is infinite, Lemma \ref{FiniteOut} implies that ${\rm Aut}(W_\Gamma)$ maps virtually onto some infinite irreducible special subgroup of $W_\Gamma$.
\end{proof}	

\section{Proof of Theorem \ref{TwoConnectedComponents}}

We begin this section by recalling the Kurosh subgroup theorem \cite{Kurosh}. Let $G$ be a free product of groups $G_1,\ldots, G_n$. The Kurosh subgroup theorem describes the algebraic structure of subgroups in $G=G_1*\ldots*G_n$. Let $H\subseteq G$ be a subgroup. Then $H$ decomposes as a free product as follows
$H=F*H_1\ldots*H_m$
where $F$ is a free group and for $i=1,\ldots,m$ the group $H_i$ is a conjugate of a subgroup of some $G_j$, $j\in\left\{1,\ldots, n\right\}$.

\begin{definition}
	Let $G$ be a group. The group $G$ is called \emph{freely indecomposable} if $G$ can not be written as a non-trivial free product of groups.
\end{definition}

Given a free product $G_1*G_2$ and a subgroup $H\subseteq G_1*G_2$. the Kurosh subgroup theorem implies that if $H$ is freely indecomposable, then $H$ is infinite cyclic or is contained in a conjugate of $G_1$ or $G_2$.

Given a finitely generated group $G$ and two free product decompostions of $G$ in freely indecomposable groups, a natural question is how these decompositions differ. The answer to this question is given by Grushko's decomposition theorem \cite{Grushko}. We recall this result in the special case where $G$ is a Coxeter group, see also \cite{Mc}. 
\begin{proposition}
	\label{Kurosh}
Let $W_\Gamma$ be a Coxeter group.
\begin{enumerate}
	\item If $\Gamma$ is connected, then $W_\Gamma$ is freely indecomposable (\cite{BahlsFree}).
	\item Let $\Gamma_1, \ldots, \Gamma_n$ be the connected components of $\Gamma$. Then $W_\Gamma=W_{\Gamma_1}*\ldots*W_{\Gamma_n}$ is a free product of freely indecomposable special subgroups.
	If $W_\Gamma$ is a free product of freely indecomposable groups $G_1,\ldots, G_m$, then $n=m$ and there exists $\sigma\in Sym(n)$ such that $G_{\sigma(i)}$ is conjugate to $W_{\Gamma_i}$.
	\item For $f\in{\rm Aut}(W_\Gamma)$ we have $W_\Gamma=f(W_{\Gamma_1})*\ldots*f(W_{\Gamma_n})$. 
\end{enumerate} 
\end{proposition}

Now we have all the ingredients for the proof of Theorem \ref{TwoConnectedComponents}.
\begin{proof}[Proof of Theorem \ref{TwoConnectedComponents}]
	Let $\Gamma$ be a Coxeter graph and $\Gamma_1,\ldots, \Gamma_n$ be the connected components of $\Gamma$. We assume that $n\geq 2$. We define $[W_{\Gamma_i}]:=\left\{wW_{\Gamma_i}w^{-1}\mid w\in W_\Gamma\right\}$ for $i=1,\ldots, n$ and $\mathcal{C}:=\left\{[W_{\Gamma_1}],\ldots, [W_{\Gamma_n}]\right\}$. Proposition \ref{Kurosh} implies that the map
	\begin{align*}
		\Phi:{\rm Aut}(W_\Gamma)&\rightarrow{\rm Sym}(\mathcal{C})\\
		f&\mapsto\Phi(f):[W_{\Gamma_i}]\mapsto[f(W_{\Gamma_i})]
	\end{align*}
	is a well-defined homomorphism. 
	
	Let $i,j\in\left\{1,\ldots,n\right\}, i\neq j$. Let us examine the natural projection $\pi\colon W_\Gamma\twoheadrightarrow W_{\Gamma_i}*W_{\Gamma_j}$. Note that the kernel of $\pi$ is the normal closure of the subgroup $\langle W_{\Gamma_k}\mid k\in\left\{1,\ldots, n\right\}, k\neq i,j\rangle$. We claim that ${\rm ker}(\pi)$ is characteristic with respect to  ${\rm ker}(\Phi)\subseteq {\rm Aut}(W_\Gamma)$. Let $f\in{\rm ker}(\Phi)$. Then for $W_{\Gamma_k}, k\neq i,j$ we have $f(W_{\Gamma_k})=w_kW_{\Gamma_k}w_k^{-1}$. Since the kernel of $\pi$ is generated by conjugates of elements in these free factors we obtain $f({\rm ker}(\pi))={\rm ker}(\pi)$.

	Hence we get a group homomorphism
	$$\Psi\colon{\rm ker}(\Phi)\to{\rm Aut}(W_\Gamma/{\rm ker}(\pi))\cong {\rm Aut}(W_{\Gamma_i}*W_{\Gamma_j})$$

  It is not hard to see that  the group ${\rm Inn}(W_{\Gamma_i}*W_{\Gamma_j})$ is contained in $\Psi({\rm ker}(\Phi))$.
  
  Now let $\rho\colon W_{\Gamma_i}*W_{\Gamma_j}\twoheadrightarrow W_{\Gamma_i}^{ab}*W_{\Gamma_j}^{ab}$ be group homomorphism induced by the abelianization $W_{\Gamma_i}\twoheadrightarrow W_{\Gamma_i}^{ab}$. 
  We claim that ${\rm ker}(\rho)$ is characteristic with respect to ${\rm im}({\rm ker}(\Phi))$. Since the kernel of $\rho$ is generated by conjugates of $ghg^{-1}h^{-1}$ where $g,h\in W_{\Gamma_i}$ and $xyx^{-1}y^{-1}$ where $x,y\in W_{\Gamma_j}$ we only have to consider what happents to these elements under $f\in{\rm im}({\rm ker}(\Phi))$. A straightforward calculation shows that ${\rm ker}(\rho)$ is characteristic with respect to ${\rm im}({\rm ker}(\Phi))$.
  
  Thus we obtain a group homomorphism
  $$\Sigma\circ\Psi\colon{\rm ker}(\Phi)\to{\rm im}({\rm ker}(\Phi))\to{\rm Aut}(W_{\Gamma_i}^{ab}*W_{\Gamma_j}^{ab}) $$
  
  Note that the subgroup ${\rm Inn}(W_{\Gamma_i}^{ab}*W_{\Gamma_j}^{ab})$ is contained in $\Sigma(\Psi({\rm ker}(\Phi)))$.
  By \cite[Corollary 3.21]{GenevoisMartin} the subgroup ${\rm Inn}(W^{ab}_{\Gamma_i}*W^{ab}_{\Gamma_j})$ has finite index in ${\rm Aut}(W_{\Gamma_i}^{ab}*W_{\Gamma_j}^{ab})$. Thus the subgroup $H:=(\Sigma\circ\Psi)^{-1}({\rm Inn}(W^{ab}_{\Gamma_i}*W^{ab}_{\Gamma_j}))$ has finite index in ${\rm ker}(\Phi)$ and we have an epimorphism
  $$H\twoheadrightarrow{\rm Inn}(W_{\Gamma_i}^{ab}*W_{\Gamma_j}^{ab})\cong W_{\Gamma_i}^{ab}*W^{ab}_{\Gamma_j}\cong\Z/2\Z^k*\Z/2\Z^l.$$ 
  This shows that ${\rm Aut}(W_\Gamma)$ virtually maps onto the infinite right-angled Coxeter group $\Z/2\Z^k*\Z/2\Z^l$.
\end{proof}

\begin{figure}[h]
	\begin{center}
		\begin{tikzpicture}
			\draw[fill=black]  (0,0) circle (1pt);
			\draw[fill=black]  (2,0) circle (1pt);
			\draw[fill=black]  (-2,0) circle (1pt);
			\draw[fill=black]  (0,2) circle (1pt);
			\draw[fill=black]  (0,-2) circle (1pt);
			\draw (-2,0)--(0,0);
			\node at (-1, 0.2) {$3$};
			\draw (0,0)--(2,0);
			\node at (1, 0.2) {$2$};
			\draw (0,0)--(0,2);
			\node at (-0.2, 1) {$2$};
			\draw (0,0)--(0,-2);
			\node at (0.2, -1) {$2$};
			\draw (-2,0)--(0,2);
			\node at (-1, 1.3) {$2$};
			\draw (0,2)--(2,0);
			\node at (1, 1.3) {$3$};
			\draw (-2,0)--(0,-2);
			\node at (-1, -1.3) {$2$};
			\draw (0,-2)--(2,0);
			\node at (1, -1.3) {$3$};
			\draw[fill=black]  (6,0) circle (1pt);
			\draw[fill=black]  (8,0) circle (1pt);
			\draw[fill=black]  (4,0) circle (1pt);
			\draw[fill=black]  (6,2) circle (1pt);
			\draw[fill=black]  (6,-2) circle (1pt);
			\draw (4,0)--(6,0);
			\node at (5, 0.2) {$3$};
			\draw (6,0)--(8,0);
			\node at (7, 0.2) {$2$};
			\draw (6,0)--(6,2);
			\node at (5.8, 1) {$2$};
			\draw (6,0)--(6,-2);
			\node at (6.2, -1) {$2$};
			\draw (4,0)--(6,2);
			\node at (5, 1.3) {$2$};
			\draw (6,2)--(8,0);
			\node at (7, 1.3) {$3$};
			\draw (4,0)--(6,-2);
			\node at (5, -1.3) {$2$};
			\draw (6,-2)--(8,0);
			\node at (7, -1.3) {$3$};
			\node at (-2, 1.8) {$\Gamma$};
		\end{tikzpicture}
	\end{center}
\caption{}
\end{figure}
Let us calculate an example. The Coxeter graph $\Gamma$ in Figure 8 has exactly two connected components $\Gamma_1$ and $\Gamma_2$. The abelianization of $W_{\Gamma_i}$ is isomorphic to $\Z/2\Z\times \Z/2\Z$ for $i=1,2$, see see \cite[Fact 3.16]{Bernhard}. Thus ${\rm Aut}(W_\Gamma)$ projects virtually onto $(\Z/2\Z\times \Z/2\Z)*(\Z/2\Z\times\Z/2\Z)$.

\begin{proof}[Proof of Corollary \ref{CorEven}]
Let $W_\Gamma$ be a Coxeter group such that there exist two non-adjacent even vertices $v, w\in V(\Gamma)$.  Then we know that the subgroup generated by $v$ and $w$ in $W_\Gamma$ is isomorphic to the infinite dihedral group whose outer automorphism group known to be finite. Thus by Theorem \ref{TwoConnectedComponents}  the automorphism group ${\rm Aut}(W_\Gamma)$  virtually surjects onto $\Z/2\Z*\Z/2\Z$.

In particular, if $W_\Gamma$ is an even Coxeter group and there exist two non-adjacent vertices, then  ${\rm Aut}(W_\Gamma)$ virtually surjects onto $\Z/2\Z*\Z/2\Z$. Further, if $W_\Gamma$ is an infinite even Coxeter group where $\Gamma$ is complete, then we know by \cite{HowlettRowleyTaylor} that ${\rm Out}(W_\Gamma)$ is finite and thus by Lemma \ref{FiniteOut} the group ${\rm Aut}(W_\Gamma)$ virtually surjects onto an infinite special subgroup of $W_\Gamma$.		
\end{proof}


\end{document}